\begin{document}

\title{Toroidal Cartesian Products Where One Factor is $3$-Connected}


\author{Elizabeth Badgett       \and Christian Millichap* \and Kenta Noguchi}


\institute{Elizabeth Badgett \at
              Department of Mathematics, Furman University, Greenville, SC 29613 \\
              \email{beppybadgett@gmail.com}           
           \and
           Christian Millichap \at
          Department of Mathematics, Furman University, Greenville, SC 29613 \\
	  \email{christian.millichap@furman.edu} \\
	  ORCID: 0000-0001-8265-1437
	  \and
	  Kenta Noguchi \at
	  Department of Information Sciences, Tokyo University of Science, Noda, Japan \\
	  \email{noguchi@rs.tus.ac.jp} \\
	  ORCID: 0000-0002-7790-0782
}

\date{Received: date / Accepted: date}

\maketitle

\begin{abstract}
In this paper,  we show that if $G$ is $3$-connected, then  the Cartesian product of graphs $G \boxempty H$ embeds on the torus  if and only if $G$ is outer-cylindrical and $H$ is a path on two vertices, $P_2$. As a by-product of our work, we also show that $K_{4} \boxempty P_{3}$ has genus two. 
\keywords{Cartesian Product of Graphs \and Genus \and Planar Graphs \and Toroidal Graphs}
\subclass{05C10 \and 57M15}
\end{abstract}

\section{Introduction}
\label{sec:intro}

Classifying embeddings of graphs on surfaces is a well-studied topic in topological graph theory. Here, we are interested in embeddings of a \textbf{Cartesian product of two graphs $G$ and $H$}, denoted $G \boxempty H$, which is defined to have vertex set $V(G) \times V(H)$ and a pair of vertices $(u,v), (u',v') \in V(G) \times V(H)$ determine an edge in $G \boxempty H$ if and only if $u = u'$ and $vv' \in E(H)$ or $uu' \in E(G)$ and $v=v'$. We refer to $G$ and $H$ as the \textbf{factors} of $G \boxempty H$. Every $G \times \{h\}$ with $h \in V(H)$ is called a \textbf{$G$-fiber} of $G \boxempty H$, and $H$-fibers are defined analogously. We refer the reader to \cite{ImKlRa2008} for further background on graph Cartesian products. The fiber structures of graph Cartesian products can sometimes be leveraged to efficiently build embeddings and determine  genera of such graphs; see \cite{MiSa2022}, \cite{MoPiWh1990}, \cite{Pi1980}, \cite{Pi1982}, \cite{Pi1989},  \cite{Sun2023}, \cite{Wh1970}, \cite{Wh1971} for some examples.  In particular, it would be interesting to know if one can determine the set of all Cartesian products of graphs that embed on a fixed surface.  Such a classification was established for \textbf{planar} graphs, that is, graphs that embed in the plane, in the work of Behzad and Mahmoodian \cite{BeMa1969}.  More recently, the second author along with Abell and McDermott have provided a complete classification of Cartesian products of graphs that  embed in the projective plane in \cite{AbMcMi2025}.



In this paper, we are interested in classifying which Cartesian product of  graphs are \textbf{toroidal}, i.e., embed on the torus. While the classifications of planar and projective planar Cartesian products were reasonably straightforward, we expect the classification of toroidal Cartesian products to be far more complicated. To start,  the work of Kuratowski \cite{Ku1930} and Wagner \cite{Wa1937} showed there are only two minor-minimal obstructions to planarity, namely $K_5$ and $K_{3,3}$, and Archdeacon in \cite{Ar1980} showed that there is a complete list  of 35 graphs that are minor-minimal obstructions to embedding on the projective plane. However, the list of minor-minimal obstructions for embedding on the torus is not completely known, and this list contains at least 17 thousand forbidden minors; see \cite{MyWo2018}. In addition, when restricting to Cartesian products of graphs, we see far more flexibility in toroidal embeddings than planar or projective planar embeddings with regards to (vertex) connectivity. If $G \boxempty H$ is planar or projective planar, then both $G$ and $H$ are each at most $2$-connected. This follows from the fact that $K_4 \boxempty P_2$ is both non-planar and non-projective planar, and any $3$-connected graph contains a $K_4$ minor; see Lemma \ref{lem:Tutte3connected}.  However, Example \ref{Example1} in Section \ref{sec:constructing3conP2} shows that there exist $G \boxempty P_2$ that are toroidal where $G$ is $4$-connected. Thus, even a partial classification of toroidal Cartesian products could be a far more complicated task than the planar and projective planar cases. 

Here, we provide some major first steps towards this classification problem, with the additional assumption that one factor is $3$-connected. In what follows, a graph $G$ is \textbf{outer-cylindrical} if there exists a planar embedding of $G$ such that the vertices of $G$ all lie on the boundary of at most two faces in this embedding. 

\begin{theorem}
    \label{thm:classification}
    Let $H$ and $G$ be simple, connected, nontrivial graphs, and suppose $G$ is $3$-connected. Then $G \boxempty H$ is toroidal if and only if $G$ is an outer-cylindrical graph and $H = P_2$.
\end{theorem}

The proof of Theorem \ref{thm:classification} is broken up into a few pieces.  In Proposition \ref{thm:Hrestricted}, we show that if $G$ is $3$-connected, then any toroidal $G \boxempty H$ must have $H = P_2$. This proof follows from the fact that $K_4 \boxempty P_3$ is not toroidal, which is proved in Theorem \ref{thm:K4P3genus}.  Proposition \ref{prop:OCboxP2} provides an explicit construction for building a toroidal embedding of $G \boxempty P_2$, where $G$ is outer-cylindrical. Then Proposition \ref{prop:toroidalimpliesOC} shows that if $G \boxempty P_2$ is toroidal, then $G$ must be an outer-cylindrical graph. This proof only requires some basic combinatorial topology of graph embeddings.

A significant by-product of our work occurs in Section \ref{sec:K4P3} where we prove that the genus of $K_4 \boxempty P_3$ is two. This work requires a careful combinatorial analysis of all potential embeddings of this graph on a torus, which employs techniques similar to \cite{BrSq1988}. In particular, even though the girth of $K_4 \boxempty P_3$ is three, one cannot hope for a triangular embedding since every edge in a $P_3$ fiber does not lie on a $3$-cycle. Thus, determining the genus of such a graph is not a straightforward task.


\section{Background}
\label{sec:Background}

In this paper, we assume all graphs are simple, connected, and non-trivial (in the sense of being nonempty and not a single vertex), unless otherwise noted. We use $S_{g}$ to denote an orientable, closed, connected surface of genus $g \in \mathbb{N} \cup \{0\}$. Given an \textbf{embedding} of a graph $G$ on a surface $S_{g}$ via a continuous injective function $f: |G| \rightarrow S_{g}$ where $|G|$ represents a geometric realization of $G$, we sometimes use  $f(G)$ to denote this embedding or just $G$ when the embedding is clear from context. If each component of $S_{g} \setminus f(G)$ is homeomorphic to an open disk, then we say this embedding is a \textbf{$2$-cell embedding}.  The \textbf{genus} of a graph $G$, denoted $\gamma(G)$, is the minimum $g \in \mathbb{N} \cup \{0\}$ such that $G$ embeds on $S_{g}$. An embedding of $G$ that realizes the minimal genus is called a \textbf{minimal embedding}.  We will mainly be interested in $2$-cell embeddings of graphs, and since the work of Youngs \cite{Yo1963} shows that each minimal embedding of $G$ is a $2$-cell embedding, we usually do not need to make a distinction when determining the genera of a graph. We refer the reader to \cite{GrTu2001} for further background on graph embeddings on surfaces. 

A $2$-cell embedding $f: |G| \rightarrow S_{g}$ provides a $2$-cell decomposition of $S_{g}$ with $|V(G)|$ vertices, $|E(G)|$ edges, and $|F|$ faces, where each face is a component of $S_{g} \setminus f(G)$. Given such an embedding, the Euler characteristic of $S_{g}$ is given by $2-2g = \chi(S_{g}) = |V(G)| - |E(G)| + |F|$. The following proposition is a well known lower bound on genus, coming from this Euler characteristic formula and analyzing the minimal number of edges on any face in an embedding of such a graph.  Recall that the \textbf{girth} of a  graph $G$ (that is not a tree) is the length of its shortest cycle.

\begin{proposition}
	\label{prop:Eulerbound}
	Let $G$ be a graph (that is not a tree) with girth $\alpha$. Then 
	$$\gamma(G) \geq 1 + \frac{|E(G)|(\alpha-2)}{2\alpha} - \frac{|V(G)|}{2}.$$
	
	Equality holds if and only if there exists an embedding of $G$ where every face is bound by an $\alpha$-cycle and every edge borders two distinct faces. 
\end{proposition}



Suppose we are given a $2$-cell embedding of $G$. We let $f_{i}$ denote the number of $i$-gon faces for this embedding. We use the terms triangle, quadrilateral, pentagon, and hexagon to refer to faces of a given embedding that are bound by a $3$-cycle, $4$-cycle, $5$-cycle, and $6$-cycle, respectively. We say that an embedding of a graph is a \textbf{triangular embedding} if every face is a triangle. We note the following basic but important facts about $2$-cell embeddings of $G$: \\

\begin{flushleft}
	\textbf{Fact 1:} Each vertex of $G$ has an open $2$-cell neighborhood in $S_g$.
	
	\textbf{Fact 2:} Each edge of $G$ is either incident to  two distinct faces or twice incident to one face.  
	
	\textbf{Fact 3:} If $G = H \boxempty P_{n}$ for some graph $H$, then each edge from a $P_n$ fiber borders two $k$-gon faces, where $k \geq 4$. 
	
	\textbf{Fact 4:} $2|E(G)| = \displaystyle\sum_{i=3}^{\infty} i f_i$. \\
\end{flushleft}

Facts 1 and 2 are general facts about $2$-cell embeddings. Fact 3 follows from Fact 2 and since any edge in a $P_n$ fiber is not contained in a $3$-cycle. Fact 4 follows from Fact 2. This collection of facts will play an important role in ruling out potential toroidal embeddings of graphs in Section \ref{sec:K4P3}.

A graph $G'$ is a \textbf{minor} of $G$ if $G'$ can be obtained by a series of vertex deletions, edge deletions, or edge contractions on $G$. Another way to find a lower bound for the genus of a graph $G$  is by finding the genus (or lower bound on the genus) for a subgraph or minor of $G$. Since we will be interested in Cartesian products of graphs in this paper, we first confirm that the product of minors from each factor results in a minor of the product.

\begin{proposition}
	\label{prop:minorproducts}
	If $G'$ and $H'$ are minors of $G$ and $H$ then $G'\boxempty H'$ is a minor of $G\boxempty H$.
\end{proposition}
\begin{proof}
	Let $G$ and $H$ be graphs and suppose $G'$ be a minor of $G$. Recall that the vertex sets and edge sets of $G\boxempty H$ are defined as $V(G\boxempty H)=V(G)\times V(H)$ and $E(G\boxempty H)= \{ \{(u,v), (u,v')\} : u \in V(G), vv' \in E(H)\} \cup \{ \{(u,v), (u',v)\} : uu' \in E(G), v \in V(H)\}$. It suffices to show $G'\boxempty H$ is a minor of $G\boxempty H$ under an edge deletion,  vertex deletion, or  edge contraction, which we now handle as three separate cases. 
	\\
	\underline{Case 1:} Let $G'=G - e$ for some $e = uu'\in E(G)$. Then $G' \boxempty H$ can be obtained from $G \boxempty H$ by deleting edge $\{(u,v), (u',v)\} \in E(G \boxempty H)$, for each $v \in V(H)$. 
	\\
	\underline{Case 2:} Let $G'=G - v_{0}$ for some $v_{0} \in V(G)$.  Recall that deleting some vertex $v_{0}$ also deletes all edges incident to $v_{0}$. Then we can obtain $G' \boxempty H$ from $G \boxempty H$ by deleting vertex $(v_{0}, v) \in V(G) \times V(H)$, for each $v \in V(H)$ (and deleting all its incident edges). 
	\\
	\underline{Case 3:} Given edge $e = uu'  \in E(G)$, we  contract the edge $e$ in $G$ to create $G'$ which now has a new vertex $a \in V(G')$ to which $u$ and $u'$ have been identified.  Then $G'\boxempty H$ can be obtained from $G\boxempty H$ by contracting  edge $\{(u, v), (u',v)\} \in E(G \boxempty H)$, for each $v \in V(H)$.  
\end{proof}

\begin{corollary}
	\label{cor:minorlowerbound}
	Suppose $G'$ and $H'$ are minors (or subgraphs) of graphs $G$ and $H$, respectively. Then $\gamma(G \boxempty H) \geq \gamma(G' \boxempty H')$.  
\end{corollary}


\section{The genus of $K_4 \boxempty P_3$}
\label{sec:K4P3}

In this section, we will show that $\gamma(K_4 \boxempty P_3) = 2$. Since every $3$-connected graph contains a $K_{4}$ minor (see Lemma \ref{lem:Tutte3connected}), this will play a pivotal role in classifying toroidal Cartesian products where one of the factors is $3$-connected. 

For the remainder of this section, set $G = K_4 \boxempty P_3$. We have that $|V(G)| = 12$ and $|E(G)| = 26$. In addition, there are twelve $3$-cycles in $G$, with four $3$-cycles in each $K_{4}$ fiber. In particular, no edge contained in a $P_3$ fiber is contained in a $3$-cycle.

\begin{lemma}
	\label{lem:trianglelimits} For any $2$-cell embedding of $G$, at most two $3$-cycles in each $K_4$ fiber are triangles, and so,  $f_{3} \leq 6$. \end{lemma}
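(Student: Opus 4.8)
The plan is to analyze how the four triangles within a single $K_4$ fiber can simultaneously be faces of a $2$-cell embedding, and show that having three or four of them as faces forces a contradiction with the embedding being $2$-cell (in particular, with Fact 1, that every vertex has a disk neighborhood). Fix one $K_4$ fiber $K$ with vertices $a,b,c,d$; its four $3$-cycles are $abc$, $abd$, $acd$, $bcd$. The key observation is local: consider the rotation at a vertex of $K$, say $a$. In $G = K_4 \boxempty P_3$, the vertex $a$ (in the middle $K_4$ fiber) has degree $4$ — three edges to $b,c,d$ inside $K$ and one edge leaving $K$ along the $P_3$ fiber; if $a$ lies in an outer $K_4$ fiber it has degree $3$ with all three edges inside $K$. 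So the cyclic rotation at $a$ is a short list, and each triangle of $K$ containing $a$ that is a face corresponds to two consecutive edges in that rotation.

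First I would handle the outer fibers: if $a$ has degree $3$ with neighbors $b,c,d$, its rotation is the single cyclic order $(ab,ac,ad)$, giving exactly three corners at $a$, so at most three of the triangles through $a$ can be faces, and I claim in fact only the incidence structure of these corners must be consistent around all four vertices simultaneously. The clean way to finish is a counting/parity argument: if all four triangles of $K$ were faces, then every edge of $K$ would be bordered on both sides by triangles of $K$ (since each edge of $K$ lies in exactly two of the four triangles), so the six edges and four triangles of $K$ would form a closed surface piece — a sphere, since $4 - 6 + 4 = 2$ — which would be a connected component of $S_g$ disjoint from the rest of $G$, contradicting connectivity of $G$ (which has $12 > 4$ vertices). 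So at most three triangles of $K$ are faces. To rule out exactly three, suppose $abc$, $abd$, $acd$ are all faces; then at vertex $a$, all three of its corners $\angle(ab,ac)$, $\angle(ab,ad)$, $\angle(ac,ad)$ would need to be distinct corners in the rotation at $a$, but there are only three edges at $a$ (in the outer case) hence only three corners total, all accounted for by pairs of consecutive edges in the cyclic order $(ab,ac,ad)$ — and the three unordered pairs $\{ab,ac\},\{ab,ad\},\{ac,ad\}$ cannot all be pairs of cyclically consecutive edges in a $3$-cycle rotation without one pair being non-consecutive, a contradiction; in the middle-fiber case $a$ has a fourth edge $e$ leaving $K$, and the same argument shows the three triangle-corners at $a$ would occupy three of the four corners, which is possible only if $ab,ac,ad$ appear consecutively in the rotation (in some order) with $e$ inserted elsewhere — then the remaining structure around $b,c,d$ has to be checked, and a short case analysis on the rotations at $b,c,d$ (each constrained the same way) produces the contradiction. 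Either way, at most two triangles of $K$ are faces.

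Since there are three $K_4$ fibers and every triangle of $G$ lies in exactly one of them, summing gives $f_3 \le 2 \cdot 3 = 6$, as claimed. The main obstacle I anticipate is the exactly-three case in the middle fiber: unlike the sphere argument, which is clean, ruling out three triangle-faces requires tracking rotations at all four vertices of the fiber at once and checking that no consistent assignment of corners exists, which is a genuine (if small) combinatorial case check. I would organize that check by first pinning down the rotation at $a$ up to symmetry, then propagating the forced corner identifications to $b$, $c$, and $d$ and showing the last vertex's rotation is over-constrained; the degree-$4$ middle vertices give just enough room that one must be careful, but the count ($4$ corners versus $3$ forced triangle-corners plus the constraints from the shared edges) still closes.
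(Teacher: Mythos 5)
Your overall strategy---localize at a vertex lying on all three alleged triangular faces and count corners in the rotation there---is essentially the paper's argument (the paper phrases it by noting that the $P_3$-fiber edge at that vertex must still border two faces, which forces some $K_4$-fiber edge onto a third face). However, as written your proof has two genuine problems. First, you have the degrees wrong: in $K_4 \boxempty P_3$ \emph{every} vertex is incident to at least one $P_3$-fiber edge, so outer-fiber vertices have degree $4$ and middle-fiber vertices have degree $5$; there are no degree-$3$ vertices. Your degree-$3$ sub-case is therefore vacuous, and worse, the contradiction you claim there is false: in a cyclic rotation of exactly three edges, \emph{every} pair is cyclically consecutive, so all three triangle-corners at such a vertex coexist without conflict. (This is precisely why $K_4$ by itself embeds in the sphere with all four faces triangular, so any correct proof must use the extra $P_3$-fiber edge; your degree-$3$ branch, if it were nonvacuous, would leave the lemma unproved.)

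Second, in the degree-$\geq 4$ case you do not actually close the argument: you assert that three triangle-corners at $a$ are ``possible only if $ab,ac,ad$ appear consecutively'' and then defer a contradiction to an unexecuted case analysis at $b,c,d$. In fact no propagation is needed: if $\deg(a)\geq 4$, the three pairs $\{ab,ac\}$, $\{ab,ad\}$, $\{ac,ad\}$ cannot all be cyclically consecutive in the rotation at $a$, since each edge has only two cyclic neighbors and a cyclic order on at least four elements admits no three mutually adjacent entries (if $ac$ and $ad$ are both neighbors of $ab$, they cannot also be neighbors of each other). Stating this one sentence, after observing that any three of the four $3$-cycles of a $K_4$ fiber share a common vertex and that every vertex of $G$ has degree at least $4$, completes the proof and makes your (slightly imprecise) sphere argument for the four-triangle case unnecessary. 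With those corrections your proof coincides in substance with the paper's.
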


\begin{proof}
	First, since $G$ has a girth of three, any triangle in a $2$-cell embedding of $G$ must be bound by a $3$-cycle. Suppose we are given a $2$-cell embedding of $G$ where there exist three triangles whose boundary $3$-cycles are all contained in the same $K_4$ fiber. Then there exists some vertex $v$ in this $K_{4}$ fiber where all three edges ($e_1$, $e_2$, and $e_3$) incident to $v$ in this $K_{4}$ fiber already border two faces in this embedding. Consider the unique edge $e \in E(K_4 \boxempty P_2)$ that is incident to $v$ and contained in a $P_3$ fiber. By Fact (2), $e$ must border exactly two faces in this embedding. However, any facial walk for such an embedding of $G$ that contains $e$ also contains one of the $e_i$ for $i=1,2,3$; otherwise, a vertex incident to $e$ would be degree $1$, which is not the case. However, this implies that some $e_i$ borders three faces, and so, violates Fact (1). Thus, any $2$-cell embedding of $G$ admits at most two triangles coming from any $K_4$ fiber. Since $3$-cycles of $G$ only occur within each $K_4$ fiber, we have that $f_{3} \leq 6$, as needed. 
\end{proof}

This lemma already places strong restrictions on potential embeddings of $G$. Suppose $G$ has a $2$-cell embedding on $S_{g}$, for some $g \in \mathbb{N} \cup \{0\}$. Then $2-2g = \chi(S_g) = 12 - 26 + |F|$, and so, $g=  8 - \frac{|F|}{2}$. From Fact 4, we know that given any $2$-cell embedding of $G$, we must have $2|E(G)| =  52 =\displaystyle\sum_{i=3}^{\infty} i f_i$. In order to maximize the number of faces in order to minimize genus, one would attempt to use six triangles in an embedding, and so, $34 = \displaystyle\sum_{i=4}^{\infty} i f_i$. This implies that any $2$-cell embedding of $G$ has at most $14$  faces. Thus $\gamma(G) \geq 1$.  Our main goal is to now prove that $\gamma(G) \neq 1$ by showing that an embedding of $G$ with $|F|=14$ is not possible.

\begin{proposition}
	\label{prop:toroidalembeddingcases}
	Suppose $G$ admits a $2$-cell embedding on the torus. Then this embedding has $|F|  = 14$ with the following possibilities:
	\begin{enumerate}
		\item $f_3 = 4$ and $f_4 = 10$, 
		\item $f_3 = 5$, $f_4 = 8$, and $f_5 =1$, 
		\item $f_3 =6$, $f_4 = 6$, and $f_5 =2$, or
		\item $f_3 = 6$, $f_4 = 7$, and $f_6 = 1$.
	\end{enumerate}
\end{proposition}

\begin{proof}
	As noted above, any toroidal embedding of $G$ must have $|F| =14$.  By Fact (4), we have that 
	
	$$52 = \sum_{i=3}^{\infty} i f_i = 3f_{3} + \sum_{i=4}^{\infty} i f_i  \geq 3f_{3} + 4\sum_{i=4}^{\infty} f_i.$$ 
	
	Thus,
	
	$$ \frac{52-3f_3}{4} \geq \sum_{i=4}^{\infty} f_i = |F| - f_{3}.$$
	
	This implies $13 + \frac{f_3}{4} \geq |F|$. Since $|F| =14$, this implies that $f_{3} \geq 4$.  We will frequently make use of this type of combinatorial restriction in this proof, though we will suppress most of the computations moving forward. Combined with Lemma \ref{lem:trianglelimits}, we have $ 4 \leq f_3 \leq 6$. 
	
	First, suppose $f_3 = 6$. Then $\displaystyle\sum_{i=4}^{\infty} f_i = 8$ and $\displaystyle\sum_{i=4}^{\infty} i f_i = 34$. As a result, at least six quadrilaterals must be used. If $f_4 = 6$, then the remaining two faces must be pentagons. If $f_4 = 7$, then the remaining face must be a hexagon. If $f_4   = 8$  then we would have $\displaystyle\sum_{i=4}^{\infty} i f_i = 32$, which is a contradiction. 
	
	Now suppose $f_3 =5$. Then $\displaystyle\sum_{i=4}^{\infty} f_i = 9$ and  $\displaystyle\sum_{i=4}^{\infty} i f_i = 37$. This forces  eight quadrilaterals to be used. Thus, $f_4 = 8$ and $f_5 =1$. 
	
	Finally, suppose $f_3 = 4$. Then $\displaystyle\sum_{i=4}^{\infty} f_i = 10$ and $\displaystyle\sum_{i=4}^{\infty} i f_i = 40$, which can only occur if all ten faces are quadrilaterals.  
\end{proof}

In light of Proposition \ref{prop:toroidalembeddingcases}, it will help to more carefully describe how faces can behave in a $2$-cell embedding of $G$ on the torus. We first introduce some necessary notation. We say that a $K_4$ fiber of $G$ is an \textbf{outer} $K_4$ if its vertices are adjacent to exactly four of the edges contained in the $P_3$ fibers. The unique $K_{4}$ fiber of $G$ whose vertices are adjacent to all edges in the $P_3$ fibers is called the \textbf{inner} $K_4$ fiber. Label the $K_4$ fibers $K^{1}$, $K^{2}$, and $K^{3}$, respectively, with $K^{2}$ inner.  We use $E_{1}$ to denote the set of edges in the $P_{3}$ fibers incident to both $K^{1}$ and $K^{2}$ and $E_{2}$ to denote the set of edges in the $P_{3}$ fibers incident to both $K^{2}$ and $K^{3}$. Given a $2$-cell embedding of $G$, we say that a quadrilateral is a \textbf{mixed quadrilateral} if its boundary is a $4$-cycle of $G$ that consists of two edges from  $E_1$ ($E_2$),  one edge from a $K^1$ ($K^3$), and the corresponding edge from $K^2$. Note, that any quadrilateral in an embedding of $G$ that contains a $P_3$ fiber edge must be a mixed quadrilateral. Up to symmetry, there are two distinct types of pentagons that bound $5$-cycles in $G$ that could occur in this $2$-cell embedding of $G$, which we denote $p_1$ and $p_2$. The 5-cycle boundary of $p_1$ contains one edge on $K^2$, two edges in either $E_1$ or $E_2$, and two edges in an outer $K_4$. Similarly the 5-cycle boundary of $p_2$ contains two edges on $K^2$, two edges in either $E_1$ or $E_2$, and one edge in an outer $K_4$. Hexagons that bound $6$-cycles in $G$ could possibly be placed in four distinct ways in a $2$-cell embedding of $G$, denoted $h_1$, $h_2$,  $h_3$, and $h_4$. The 6-cycle boundary of $h_1$ contains one edge in $K^1$, two edges in $E_1$, two edges in $E_2$, and one edge in $K^3$. The 6-cycle boundary of $h_2$ contains two edges in $K^2$, two edges in either $E_1$ or $E_2$, and two edges in an outer $K_4$. The 6-cycle boundary of $h_3$ contains one edge in $K^2$, two edges in either $E_1$ or $E_2$, and three edges in an outer $K_4$. The $6$-cycle boundary of $h_4$ contains one edge in an outer $K_4$, two edges in either $E_1$ or $E_2$, and three edges in  $K^2$.

For the above description, we only considered faces in a $2$-cell embedding of $G$ on the torus whose boundary was an appropriate cycle. We now show those are the only types of faces that can occur in such an embedding. 

\begin{lemma}\label{lem:K4P3faces}
	If $G$ admits a $2$-cell embedding on the torus, then any $n$-gon face of this embedding must be bound by an $n$-cycle of $G$.
\end{lemma}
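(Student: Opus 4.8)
The plan is to show that a face of a $2$-cell embedding of $G$ on the torus cannot have a boundary walk that traverses any vertex or edge more than once, which then forces each $n$-gon face to be bounded by an honest $n$-cycle. I would argue by analyzing the local structure at a vertex, as in the proof of Lemma \ref{lem:trianglelimits}, combined with the face-count restrictions from Proposition \ref{prop:toroidalembeddingcases}.

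First I would set up the contradiction: suppose some face $F$ of the embedding has a boundary walk $W$ that is not a simple cycle, so $W$ either repeats a vertex or traverses an edge twice (on both sides). The edge-repetition case is the easier one to dispose of: if an edge $e$ is traversed twice by the boundary walk of a single face, then by Fact (2) $e$ borders that face ``twice'' and no other, which pins down the two faces at $e$; I would trace through the facial walks forced at the endpoints of $e$ and show that some incident edge is then forced to border three faces, contradicting Fact (1)/(2). For $G = K_4 \boxempty P_3$ this is quite rigid because every vertex has degree $4$ or $5$ and the $P_3$-fiber edges are not on any $3$-cycle (so the local rotation around a vertex on a $P_3$-fiber edge is heavily constrained, exactly the mechanism used in Lemma \ref{lem:trianglelimits}).

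Next, for the vertex-repetition case (where $W$ passes through a vertex $v$ twice but no edge is repeated), I would observe that such a face has boundary length at least $4$ and ``uses up'' at least four of the edge-sides at $v$; since $\deg(v) \le 5$, this severely limits how the remaining faces at $v$ can be arranged, and in particular I would show it forces too few faces overall, contradicting that $|F| = 14$ (Proposition \ref{prop:toroidalembeddingcases}). Alternatively, and perhaps more cleanly, I would note that an $n$-gon face whose boundary walk is not a simple cycle has strictly fewer than $n$ distinct edges on its boundary, so the count $2|E(G)| = \sum i f_i$ from Fact (4) would overcount edge-sides; combined with the tight numerics in Proposition \ref{prop:toroidalembeddingcases} (where every inequality used there was essentially sharp), any such slack is incompatible with $|F| = 14$. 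I would phrase this as: a non-simple boundary walk of length $n$ contributes $n$ to $\sum i f_i$ but covers fewer than $n$ edge-incidences, which breaks the balance that forced cases (1)--(4).

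The main obstacle I anticipate is handling the vertex-repetition case carefully, since there a face can still have exactly $n$ distinct edges on its boundary (a ``figure-eight'' walk through a cut-like vertex in the face), so the naive edge-counting argument does not immediately apply; there one really must use the rotation system at the repeated vertex $v$ together with the small degree of $v$ and the fact that the two ``lobes'' of the walk at $v$ together with the other faces at $v$ must account for all $\deg(v)$ edge-ends, and show this is impossible for all the allowed face-size profiles. I expect the cleanest route is to first reduce to faces of size $\le 6$ (which Proposition \ref{prop:toroidalembeddingcases} guarantees: the only faces that can appear are triangles, quadrilaterals, pentagons, and hexagons), and then do a short finite case check on boundary walks of length $3,4,5,6$ that revisit a vertex, ruling each out via the degree-$4$-or-$5$ constraint and the absence of $3$-cycles through $P_3$-fiber edges.
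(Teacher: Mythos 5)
Your overall strategy (reduce to faces of length at most $6$ via Proposition \ref{prop:toroidalembeddingcases}, then rule out non-simple boundary walks) has the right shape, but two of your proposed mechanisms do not work and the hardest case is left unresolved. First, the ``edge-count imbalance'' argument is incorrect: Fact (4) counts edge-\emph{sides}, not distinct edges, so an $n$-gon face whose boundary walk repeats an edge or a vertex still contributes exactly $n$ to $\sum_i i f_i$ (a doubly traversed edge simply has both of its sides on that one face, which Fact (2) explicitly permits). There is no slack to exploit, so the numerics of Proposition \ref{prop:toroidalembeddingcases} are unaffected by non-simple walks. Second, the observation that actually kills every face of length at most $5$ is missing from your plan: since $G$ has girth $3$, a closed walk of length $n\le 5$ that is not an $n$-cycle must contain a backtrack $eve$ (it decomposes at a repeated vertex into closed subwalks whose lengths sum to $n$, and in a simple graph of girth $3$ one of those pieces, having length at most $2$, forces a backtrack), and a facial walk that backtracks at $v$ along $e$ forces $\deg(v)=1$, which is false in $G$. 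That single remark disposes of $n=3,4,5$ with no case analysis; by contrast, your plan to derive ``some edge borders three faces'' from an edge repetition is not the right obstruction, and I do not see how to make it go through.

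The genuinely hard case is a hexagonal face whose non-backtracking boundary walk decomposes into two $3$-cycles, which necessarily lie in a single $K_4$ fiber (those are the only $3$-cycles of $G$, and any two that meet share an edge of one fiber). Such a walk has no local degree obstruction at the repeated vertex --- the face simply occupies two of the corners there --- so the finite case check ``via the degree-$4$-or-$5$ constraint'' that you propose will not eliminate it. The paper instead uses a global count: $f_6\ge 1$ forces $f_3=6$ and $f_4=7$ by Proposition \ref{prop:toroidalembeddingcases}; the six triangles and this hexagon border only $K_4$-fiber edges, so the eight $P_3$-fiber edges require $16$ face-incidences supplied entirely by the seven quadrilaterals; but any quadrilateral meeting a $P_3$-fiber edge is a mixed quadrilateral meeting exactly two such edges, giving at most $14$ incidences, a contradiction with Fact (2). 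Without a step of this kind your argument cannot be completed, so the proposal as written has a genuine gap.
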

\begin{proof}
	Suppose we are given a $2$-cell embedding of $G$ on the torus.  Let $F$ be a face in this embedding. Then the boundary of $F$ is a closed walk in $G$ determined by a ``facial walk'' of this embedding. From Proposition \ref{prop:toroidalembeddingcases}, we only need to consider when $F$ is a triangle, quadrilateral, pentagon, or hexagon. We will show that if the facial walk for $F$ is not an appropriate cycle, then we always reach a contradiction. 
	
	First, suppose our embedding admits an $n$-gon face, $F$, that is not bound by an $n$-cycle for some $n=3$, $4$, or $5$. The boundary of $F$ must be a closed walk in $G$, and since the girth of $G$ is three, this closed walk must have a subwalk of the form $eve$ for some $e \in E(G)$ and $v \in V(G)$. However, this implies that $G$ admits a degree one vertex, which is a contradiction.

	
	Now, suppose $F$ is a hexagon and the facial walk of $F$ determines a closed walk of length six in $G$ that is not a $6$-cycle.  If the facial walk for $F$ does not contain any cycle subgraphs, then we once again get a degree one vertex in $G$, which is a contradiction. However, in any $K_4$ fiber of $G$, there exist pairs of $3$-cycles that intersect along a single edge. This is the only scenario where the facial walk of $F$ could contain cycle subgraphs and not be a $6$-cycle. However, such an $F$ is not possible. If $f_6 \geq 1$, then Proposition \ref{prop:toroidalembeddingcases} shows that $f_3 = 6$ and $f_4 =7$. Note that, any triangle in this embedding can only bound edges in $K_4$ fibers, and we just assumed our one hexagon only bounds edges in $K_4$ fibers. There are only seven quadrilaterals left in this embedding and they must be mixed quadrilaterals, each of which meets exactly two $P_3$ fiber edges.   However, there are eight edges in our $P_3$ fibers, each of which must lie on the boundary of two faces in this embedding, and so, some edge in our $P_3$ fiber will only be border one face, which is a contradiction.
\end{proof}







\begin{theorem}
	\label{thm:K4P3genus}  $\gamma(K_{4} \boxempty P_{3}) =2$.
\end{theorem}

\begin{proof}
	We will first show that $G = K_{4} \boxempty P_{3}$ does not embed on the torus by analyzing the four possibilities given in Proposition \ref{prop:toroidalembeddingcases}. We then show that $G$ does admit an embedding on $S_{2}$.

	Suppose $G$ admits a $2$-cell embedding on the torus with $|F|=14$. In each case, we show that our assumptions violate Fact (2), giving a necessary contradiction. The following is an important fact we will use throughout the proof: any $e \in E_1 \cup E_2$ does not lie on a $3$-cycle and if such an edge $e$ is bound by a quadrilateral in this embedding, then it must be a mixed quadrilateral. 
	
	Suppose  $f_3 = 4$ and $f_4  = 10$.  To make sure each edge in $E_1 \cup E_2$ is incident to two faces, this embedding must have eight mixed quadrilaterals, four of which are incident to edges in $E_1$ and four of which are incident to edges in $E_2$. Each of these eight mixed quadrilaterals is incident to exactly one edge in $K^2$. Since $12 = 2 |E(K^2)|$ and each edge must border exactly two faces,  four of the (not necessarily distinct) edges in $K^2$ must be bound by faces in this embedding that are not mixed quadrilaterals. Since each edge in $E_1 \cup E_2$ already bounds two faces, there must exist a $4$-cycle in $K^2$ that is the boundary of a quadrilateral in this embedding. At this point, all faces incident to edges in $K^2$ have been determined. Thus, by Lemma \ref{lem:trianglelimits} and our assumptions on $f_3$ and $f_4$, there are two triangles incident to only edges in $K^1$, two triangles incident to only edges in $K^3$, and one quadrilateral that must be incident to either only edges in $K^1$ or $K^3$. Note that $K^1$ and $K^3$ each have four edges incident to mixed quadrilaterals. Thus, either some edges in $K^1$ do not border two faces and some edges in $K^2$ border at least three faces, or vice versa, giving a contradiction.

	Suppose $f_3 = 5$, $f_4 = 8$, and $f_5 =1$. The two types of pentagons ($p_1$ and $p_2$ described earlier in this section) each bound two edges coming from either $E_1$ or $E_2$. As a result, we must then have seven mixed quadrilaterals in this embedding to make sure each edge in $E_1 \cup E_2$ borders exactly two faces. This leaves one quadrilateral and five triangles to complete the embedding. Since each mixed quadrilateral is incident to an edge in $K^2$ and a pentagon is incident to either one or two edges in $K^2$, either four or three more (not necessarily distinct) edges in $K^2$ must be incident to the other faces in this embedding. By Lemma \ref{lem:trianglelimits} and $f_3 =5$, either one or two triangles are incident to only edges in $K^2$. Having two triangles bordered by edges in $K^2$ results in some edge in $K^2$ having at least three faces incident to it, which is a contradiction. So, suppose there is only one triangle bordered by edges in $K^2$, and so, each outer $K_4$ is bordered by two triangles. In this case, a $p_2$ pentagon must occur to make sure each edge of $K^2$ is bordered by exactly two faces. This also means the one remaining quadrilateral must be incident to only edges in a single outer $K_4$, say $K^1$. Then some edge in $K^1$ has three faces incident to it. Thus, an embedding of $G$ with $f_3 = 5$, $f_4 = 8$, and $f_5 =1$ is not possible.

	Suppose  $f_3 = 6$, $f_4 = 6$, and $f_5 = 2$. By Lemma \ref{lem:trianglelimits}, each $K_4$ fiber contains exactly two triangles. Note that, each pentagon borders a pair of edges in either $E_1$ or $E_2$. Thus, all six quadrilaterals must be mixed quadrilaterals in order to make sure each edge in $E_1 \cup E_2$ borders exactly two faces. Each mixed quadrilateral borders exactly one edge in $K^2$, each pentagon borders either one or two edges in $K^2$, and there are two triangles bordering a total of six edges in $K^2$. As a result, a minimum of $14$ edges in $K^2$ are bordered by faces, which forces some edge in $K^2$ to border at least three faces, providing a contradiction. 
	
	Suppose  $f_3 = 6$, $f_4 = 7$, and $f_6 =1$. By Lemma \ref{lem:trianglelimits}, each $K_4$ fiber contains exactly two triangles. As discussed before the proof, there are four types of hexagons that can exist in an embedding of $G$. We now break this case into  sub-cases, depending on what type of hexagon is used. 
	
	First, suppose an $h_1$ hexagon is used in this embedding. Since this hexagon borders four edges from $E_1 \cup E_2$, we must have six mixed quadrilaterals in this embedding to ensure every edge in $E_1 \cup E_2$ borders exactly two faces. Thus, we only need to determine where the last quadrilateral is placed in this embedding. This quadrilateral must be bounded by edges in a $K_4$ fiber since the union of the pentagon and six mixed quadrilaterals collectively border every edge in $E_1 \cup E_2$ twice. However, no matter which $K_4$ fiber is used here, we will have an edge incident to at least three faces, providing a contradiction.
	
	Now, suppose an $h_2$, $h_3$, or $h_4$ hexagon is used in this embedding. Such a hexagon does not border one of the outer $K_4$ fibers, say $K^3$. In addition, such a hexagon borders exactly two edges from either $E_1$ or $E_2$. As a result, all seven quadrilaterals must all be mixed quadrilaterals to ensure every edge in $E_1 \cup E_2$ borders exactly two faces. Four of these mixed quadrilaterals will each be incident to one edge in $K^3$ and the other three mixed quadrilaterals do not border $K^3$. At this point, we have determined where all our faces are placed and we see that some edges in $K^3$ are bordered by at most one face, which is a contradiction.

	We now describe how to embed $G$ on $S_2$. First, embed each $K_4$ fiber on a separate $2$-sphere as the $1$-skeleton of a tetrahedron. We use $F_i$ for $i=1,\ldots, 4$ to denote the four triangles in one of these embeddings. First, delete the interiors of $F_1$ and $F_2$ from the $K^{1}$ embedding and from the $K^{2}$ embedding. Insert a tube (homeomorphic to $\mathbb{S}^{1} \times [0,1]$),  between the $F_j$ face deleted from $K^{1}$ and the $F_j$ face deleted from $K^{2}$ for $j=1,2$. The four edges of $E_{1}$ can be embedded on these two tubes. Now, delete the triangular faces $F_3$ and $F_4$ coming from the $K^{2}$ embedding on a $2$-sphere and also delete $F_3$ and $F_4$ from the $K^{3}$ embedding on a $2$-sphere. Insert two tubes in a similar fashion to embed the edges of $E_{2}$, completing the embedding of $G$. Since this embedding involves three disjoint $2$-spheres joined by four tubes, the resulting genus of the surface is two. 
\end{proof}


\section{Restricting toroidal $G \boxempty H$ via vertex connectivity.}
\label{sec:connectedrestricted}

The \textbf{vertex-connectivity} of a graph $G$, denoted $\kappa(G)$, is the minimum cardinality of a vertex-cut if $G$ is not complete or $\kappa(G) =n-1$ if $G=K_n$ for $n\in\mathbb{N}$. A connected graph $G$ is \textbf{$k$-connected} for $1 \leq k \leq \kappa(G)$. Our goal in this section is to examine how vertex-connectivity can affect whether or not $G \boxempty H$ is toroidal. We first note that $G$ and $H$  can  be at most $4$-connected, if $G \boxempty H$ is toroidal. 

\begin{proposition}
	\label{prop:5conn}
	If $\gamma(G \boxempty H) \leq 1$, then $\kappa(G), \kappa(H) \leq 4$. 
\end{proposition}

\begin{proof}
	We proceed by contrapositive. Suppose $\kappa(G) \geq 5$. Since $H$ is a simple connected graph with $|V(H)| \geq 2$, we see that $H$ contains a $P_2$ subgraph. Note that, $|V(G \boxempty P_2)| = 2|V(G)|$ and $|E(G \boxempty P_2)| = 2|E(G)| + |V(G)|$. Since $G$ is simple, the girth of $G \boxempty P_2$ is at least three. By Proposition \ref{prop:Eulerbound}, we have that 
	
	$$\gamma(G \boxempty P_2) \geq 1 + \frac{2|E(G)| + |V(G)|}{6} - \frac{2|V(G)|}{2} = 1 + \frac{1}{3}|E(G)| - \frac{5}{6}|V(G)|.$$
	
	Since $G$ is $5$-connected, for all $v \in V(G)$, we have $deg(v) \geq 5$. By the Handshaking Lemma, $\sum deg(v) = 2|E(G)|$. Thus, $\frac{5}{2}|V(G)| \leq |E(G)|$, and so, we can conclude that 
	
	$$ \gamma(G \boxempty P_2) \geq 1 + \frac{1}{3}\left(\frac{5}{2}|V(G)|\right) - \frac{5}{6}|V(G)| = 1.$$
	
	Equality holds if and only if $G$ is $5$-regular and $G \boxempty P_2$ admits a  triangular emdedding. However, any edge in a $P_2$ fiber is not contained in any $3$-cycle of $G \boxempty P_2$, and so, a triangular embedding of $G \boxempty P_2$ does not exist. Corollary \ref{cor:minorlowerbound} implies that $\gamma(G \boxempty H) \geq \gamma(G \boxempty P_2) > 1.$
\end{proof}

We now work to show that severe restrictions can be placed on $H$ when $\kappa(G) \in \{3,4\}$. We first need a basic fact about $3$-connected graphs.

\begin{lemma}
	\label{lem:Tutte3connected}
	Let $G$ be a $3$-connected graph. Then $G$ contains a $K_4$ minor.
\end{lemma}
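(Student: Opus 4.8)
The plan is to prove that every $3$-connected graph $G$ contains a $K_4$ minor by induction on $|V(G)|$, using the standard structure theory of $3$-connected graphs. The base case is $|V(G)| = 4$: the only $3$-connected simple graph on four vertices is $K_4$ itself, which trivially contains a $K_4$ minor. For the inductive step, I would invoke a contraction lemma of the Tutte/Thomassen type: every $3$-connected graph $G$ with $|V(G)| \geq 5$ contains an edge $e$ such that the contraction $G/e$ is again $3$-connected. Granting this, let $G' = G/e$; by the inductive hypothesis $G'$ has a $K_4$ minor, and since $G'$ is itself a minor of $G$ (it is obtained by a single edge contraction), the minor relation is transitive, so $G$ has a $K_4$ minor as well.

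First I would state and use the contractible-edge lemma as the engine of the argument. The key point in its proof is that if no edge contraction preserves $3$-connectivity, then for every edge $e = xy$ there is a vertex $z$ such that $\{x, y, z\}$ is a $3$-cut of $G$; choosing $e$ and $z$ so that the smallest component $C$ of $G - \{x,y,z\}$ is as small as possible, and then examining a neighbor of $z$ inside $C$, one derives a contradiction by producing a strictly smaller component. I would present this only in outline, since it is a textbook result (e.g., Diestel, \emph{Graph Theory}), and cite it rather than reproduce the full case analysis.

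Alternatively — and perhaps more in the spirit of a self-contained short lemma — I would argue directly: a $3$-connected graph on at least four vertices has minimum degree at least $3$, so it contains a cycle, and in fact one can find a vertex $v$ of degree $\geq 3$ together with three internally disjoint paths from $v$ to a common cycle not through $v$ (a "theta-like" configuration), which after contracting the connecting paths yields a $K_4$ minor. Concretely, take any cycle $D$ in $G$ and any vertex $v \notin D$ (if all vertices lie on a single cycle, $G$ being $3$-connected forces an extra chord, and a cycle-plus-chord already contains a $K_4$ minor after contraction); by Menger's theorem applied in the $3$-connected graph $G$ there are three paths from $v$ to $D$ meeting $D$ only at their endpoints and otherwise disjoint, and these three endpoints split $D$ into three arcs, so contracting the arcs and the paths gives exactly a $K_4$ minor.

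The main obstacle is choosing which of these routes to commit to and pinning down the one edge case cleanly: in the direct approach, the situation where $V(G)$ is covered by a single cycle (so no external vertex $v$ exists) must be handled separately, and one must be careful that the three paths guaranteed by Menger's theorem genuinely produce four branch vertices of degree $3$ in the minor rather than degenerating. In the contraction-lemma approach, the obstacle is simply that the lemma itself requires the smallest-component argument, which I would rather cite than prove. I expect to take the contraction-lemma route, cite it to a standard reference, and keep the proof of this lemma to three or four lines.
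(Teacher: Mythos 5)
Your committed route (induction on $|V(G)|$ via Tutte's contractible-edge lemma) is correct, but it differs from the paper's argument. The paper instead cites Tutte's wheel theorem -- every simple $3$-connected graph contains some wheel $W_n$ as a minor -- and then observes that $W_3 = K_4$ is a minor of every $W_n$ by deleting spokes and contracting rim edges. Both proofs lean on a black-box result of Tutte \cite{Tu1961}; yours uses the contractible-edge lemma and gets a clean three-line induction (base case $K_4$, contract a contractible edge, apply transitivity of the minor relation), while the paper's choice of the wheel theorem is the more natural fit here because wheels are used elsewhere in the paper (Section 5 builds toroidal products from wheels, and Section 6 constructs the graphs $H_n$ by Tutte moves on $W_n$), so the citation does double duty. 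One caution about your sketched alternative: the parenthetical claim that ``a cycle-plus-chord already contains a $K_4$ minor after contraction'' is false -- a cycle with a single chord is a theta graph, which has no $K_4$ minor -- so the Hamiltonian-cycle edge case of the Menger-based argument genuinely requires two crossing chords (which $3$-connectivity does supply, but only after an additional argument). Since you explicitly commit to the contraction-lemma route rather than this one, the proposal as a whole stands.
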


\begin{proof}
	The work of Tutte \cite{Tu1961} shows that any simple $3$-connected graph contains a wheel graph $W_n$ as a minor, for some $n \geq 3$. Whenever $m \leq n$, it is easy to see that $W_m$ is a minor of $W_n$:  one can first delete an appropriate number of the spoke edges of $W_n$ that connect to the central vertex, and then, contract some of the edges along the $n$-cycle of $W_n$ to obtain a $W_m$ minor. Thus, $W_3 = K_4$ is a minor of any simple $3$-connected graph, as needed. 
\end{proof}

	This result has immediate consequences for classifying toroidal Cartesian products where one factor is $3$-connected.
	
	\begin{proposition}
		\label{thm:Hrestricted}
		Let $G$ be a $3$-connected graph. If $\gamma(G \boxempty H) \leq 1$, then $H = P_2$.
	\end{proposition}
	
	\begin{proof}
		We proceed by contrapositive. Suppose $H$ is a (simple, connected, non-trivial) graph where $H \neq P_2$. Then $H$ contains a $P_3$ subgraph. By Lemma \ref{lem:Tutte3connected}, we know that $G$ has a $K_4$ minor. Then by Theorem \ref{thm:K4P3genus} and Corollary \ref{cor:minorlowerbound}, we have that $\gamma(G \boxempty H) \geq \gamma(K_{4} \boxempty P_3) = 2$. 
	\end{proof}
	
	Combined, Proposition \ref{thm:Hrestricted} and Proposition \ref{prop:5conn} give rise to the following questions: Does there exist an infinite set of $3$-connected ($4$-connected) graphs $\{G_i\}$ such that each $G_i \boxempty P_2$ is toroidal? If so, can we classify such $G_i$?


	\section{Classifying toroidal $G \boxempty P_2$, where $\kappa(G) \in \{3,4\}$.}
	\label{sec:constructing3conP2}
	
	In this section, we fully address both of questions from above. Recall that a graph $G$ is \textbf{outer-cylindrical}  if it embeds in the plane or the $2$-sphere such that there are two distinct faces whose boundaries collectively contain all the vertices of $G$. Since the class of outer-cylindrical graphs is closed under taking minors, the Graph Minor Theorem of Robertson and Seymour \cite{RoSe2004} implies that there is a finite list of minor-minimal obstructions for this class of graphs, which were then determined by  Archdeacon--Bonnington--Dean--Hartsfield--Scott \cite{ArBo2001}. The following proposition motivates our interest in outer-cylindrical graphs.
	
	\begin{proposition} \label{prop:OCboxP2}
		If $G$ is an outer-cylindrical graph, then $\gamma(G \boxempty P_2) \leq 1$.
	\end{proposition}
	
	\begin{proof}
		We first construct disjoint outer-cylindrical embeddings of the two $G$ fibers (call them $G_1$ and $G_2$) of $G \boxempty P_2$ in the $2$-sphere. Take an outer-cylindrical embedding of $G$ as $G_1$, and reflect $G_1$ over a line that does not intersect $G_1$ to build the  outer-cylindrical embedding of $G_2$. All the vertices of  each $G_i$ for $i=1,2$ are contained in two faces of this embedding, and one of these faces, which we call the outer face, is common between $G_1$ and $G_2$. For each vertex $v$ in $G_1$ on this outer face, we can embed on edge on the $2$-sphere connecting $v$ to the corresponding vertex in $G_2$. We can then cut out the interiors of the ``inner'' faces for $G_1$ and $G_2$ and add a tube between them carrying the remaining edges of $G \boxempty P_2$ to build an embedding of $G \boxempty P_2$ on the torus. See Figure \ref{figOCP2embedding} for a schematic of this embedding. \end{proof}
	
	\begin{figure}[h!]
		\centering
		\begin{overpic}[scale=0.7]{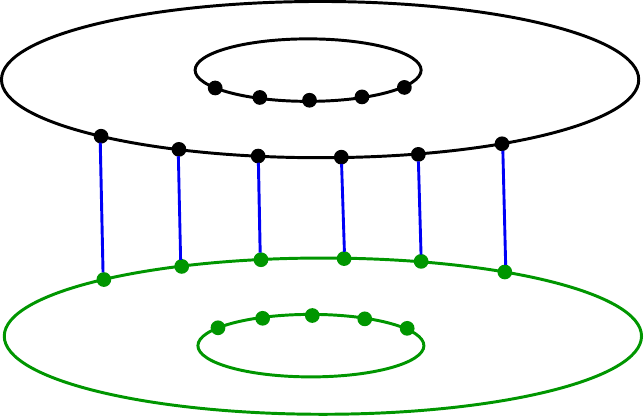}
			\put(15, 10){$G_{1}$}
			\put(15,53){$G_{2}$}
		\end{overpic}
		\caption{The inner and outer faces for the $G_{1}$ fiber are depicted in green and the inner and outer faces for the $G_{2}$ fiber are depicted in black. The edges contained in the $P_{2}$ fibers that connect to vertices on the two outer faces are depicted in blue. The edges contained in the $P_{2}$ fibers that connect to vertices on the inner faces are not depicted.}
		\label{figOCP2embedding}
	\end{figure}    

In fact, a converse to Proposition \ref{prop:OCboxP2} also holds. 

\begin{proposition}
    \label{prop:toroidalimpliesOC}
    Suppose $G$ is a (connected) graph. If $\gamma(G \boxempty P_2) \leq 1$, then $G$ is an outer-cylindrical graph. 
\end{proposition}

\begin{proof}
Let $G$ be a (connected) graph such that $\gamma(G \boxempty P_2) \leq 1$. First, if $\gamma(G \boxempty P_2) = 0$, then the classification of planar Cartesian products of graphs given by Behzad and Mahmoodian \cite{BeMa1969} implies that $G$ is outerplanar, and so, $G$ is outer-cylindrical. Moving forward, we assume $\gamma(G \boxempty P_2) = 1$, which implies that $G$ is not outerplanar. Furthermore, there exists a minimal genus embedding of $G \boxempty P_2$ on the torus, $\mathbb{T}$, which must be a $2$-cell embedding. We work with this fixed embedding for the remainder of this proof. Let $G^{+}$ and $G^{-}$ be the two $G$-fiber subgraphs of $G \boxempty P_2$. Consider the induced embedding of $G^{+}$ on $\mathbb{T}$ and the collection of regions $\mathbb{T} \setminus G^{+} = \{R_{1}, \ldots, R_{k}\}$. There are two possible cases to consider depending on the topology of these regions. 

\underline{Case 1:} Suppose each region $R_{i}$, for $1 \leq i \leq k$, is a $2$-cell region.  Since $G^{-}$ is connected, it must be embedded in the interior of some $2$-cell region $R_{i}$.  Since $G^{+}$ is not outerplanar, there exists some vertex $v^{+} \in V(G^{+})$ that is not on the boundary walk $W$ of the region $R_{i}$. However, there exists an edge $e = v^{+}v^{-}$ in a $P_2$ fiber connecting $v^{+}$ to the corresponding vertex $v^{-} \in V(G^{-})$, and clearly, $W$ provides an obstruction for $e$ to exist in such an embedding. This gives a contradiction. 

\underline{Case 2:} Suppose there exists some region, say $R_{j}$, that is a non-$2$-cell region. First, $R_{j}$ must be the unique non-$2$-cell region in this collection, since otherwise, $G$ would not be connected. Then either (i) $R_j$ is homeomorphic to a once-punctured torus or (ii) $R_j$ is homeomorphic to an open annulus. Sub-case (i) implies that $G^{-}$ must be embedded in $R_j$, since otherwise, $\gamma(G \boxempty P_2) = 0$. This also implies that $G^{+}$ is embedded on a closed disk in $\mathbb{T}$ whose boundary is some walk in $G^{+}$. However, we can now apply the same argument used in Case 1 to see that there is a vertex on the interior of this closed disk that must connect via an edge to a vertex in the complement of this closed disk, but such an edge would not exist in this embedding, giving a contradiction for this sub-case. So, suppose $R_j$ is homeomorphic to an annulus. Then $G^{-}$ must be embedded in $R_j$, since otherwise, $\gamma(G \boxempty P_2) = 0$. Let $W_1$ and $W_2$ be the two boundary closed walks of $R_j$. Similar to the argument in Case 1, if there exists  $v^{+} \in V(G^{+})$ that is not on $W_1 \cup W_2$, then $W_1 \cup W_2$ forbids the existence of the edge $e = v^{+}v^{-}$, giving a contradiction. Thus every vertex of $G^{+}$ must lie on $W_1 \cup W_2$, and so, $G$ is outer-cylindrical, as needed. 
\end{proof}

Collecting our major results provides a proof of our classification theorem that was stated in the introduction.


\begin{proof}[Proof of Theorem~\ref{thm:classification}]
Let $G$ be a $3$-connected graph. For the forwards direction,  Propositions \ref{thm:Hrestricted} and  \ref{prop:toroidalimpliesOC} collectively give the desired conclusions. For the backwards direction, Proposition \ref{prop:OCboxP2} gives the desired conclusion. 
\end{proof}

    To conclude this section, we provide one set of examples of Cartesian products of graphs that are toroidal.

	\begin{example}\label{Example1}
	    For $n \geq 3$, define $C^{2}_{2n}$ as the graph obtained from the $2n$-cycle $C_{2n}$ by adding an edge between every pair of vertices that are a distance two apart in $C_{2n}$. See Figure \ref{figC2_6} for a standard planar embedding of $C^{2}_{6}$, which also shows this graph is outer-cylindrical. For $n \geq 3$, $C^{2}_{2n}$ is known to be $4$-connected; see \cite{Ma1980}. Thus, $\{C^{2}_{2n} \boxempty P_{2}\}_{n=3}^{\infty}$ provides an infinite class of toroidal Cartesian products of graphs where one factor is $4$-connected. Furthermore, by Propositions \ref{prop:5conn} and \ref{thm:Hrestricted}, we see that $G = C^{2}_{2n}$ realizes the maximal vertex connectivity for toroidal $G \boxempty P_2$.  
	
	\begin{figure}[h!]
		\centering
		\begin{overpic}[scale=0.3]{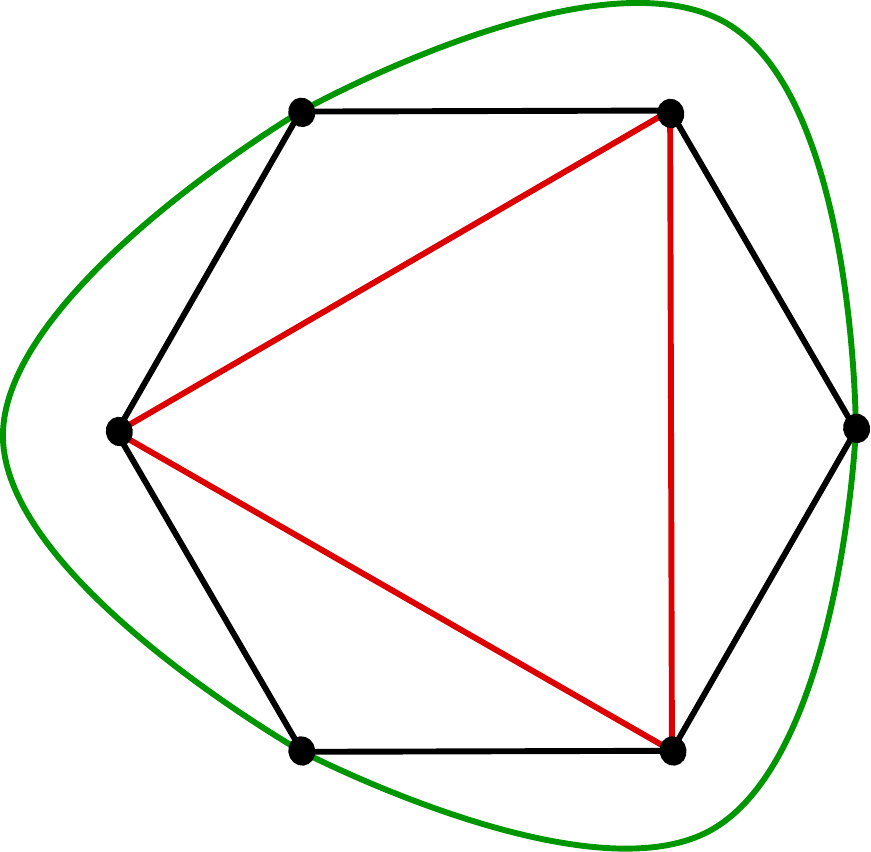}
			\put(30,89){$v_{1}$}
			\put(77,89){$v_{2}$}
			\put(101,49){$v_{3}$}
			\put(79,10){$v_{4}$}
			\put(27, 10){$v_{5}$}
			\put(7,48){$v_{6}$}
		\end{overpic}
		\caption{The standard planar embedding of $C^{2}_{6}$ so that the green cycle $v_{1}v_{3}v_{5}$ and red cycle $v_{2}v_{4}v_{6}$ each bound faces for an outer-cylindrical embedding.}
		\label{figC2_6}
	\end{figure}
	\end{example}
	


\begin{acknowledgements}
We would like to thank Doug Rall and Thomas Mattman for feedback and suggestions related to this article. 
 \end{acknowledgements}

\section*{Funding}
This work was financially supported by the Furman University Department of Mathematics  via the Summer Mathematics Undergraduate Research Fellowships.

\section*{Availability of data and materials}
Not applicable. 

\section*{Conflict of Interests}
The authors have no relevant financial or non-financial interests to disclose.

\bibliographystyle{hamsplain}
\bibliography{biblio}

\end{document}